\theoremstyle{plain}
\newtheorem{theorem}{Theorem}
\theoremstyle{definition}
\theoremstyle{remark}
\numberwithin{equation}{section} 
\begin{document}
	\title{Abstract  hyperbolic chaos}

\author{Akhmet Marat}
\address{Middle East Technical University\\ Department of Mathematics\\ Ankara\\ Turkey}
\email{marat@metu.edu.tr}

\begin{abstract}    The  abstract  hyperbolic   sets    are  introduced.  Continuous   and differentiable mappings as well as  rate  of convergence  and transversal manifolds are not under  discussion,  and  the  symbolic dynamics paradigm  is realized in a new  way.  Our  suggestions   are  for  more  neat  comprehension  of chaos  in the   domain.  The  novelties   can  serve for  revisited models as well  as motivate new ones.
\end{abstract}
\subjclass[2010]{65P20;34H10;60G05;60G10}
\keywords{Abstract  hyperbolic  sets,  Bi-infinite indexing, Chaotic stable set,    Chaotic unstable set,  Poincar\'{e} chaos,  Li-Yorke chaos,  Devaney chaos,   Domain-structured chaos, Smale Horseshoe}
\maketitle

\section{Introduction}

Hyperbolic  sets  were  introduced by  S. Smale in famous papers \cite{Smale1, Smale2}. They  are   fundamental  for chaos  theory,   since  describe profoundly  domains,  where  the dynamics is happen.   The  genius  of H. Poincar\'{e},   who    explored  the  complexity  in celestial mechanics \cite{Poin},  and scrupulous  investigation  of  the Van der Pol  oscillator  by  M. Cartright, J. Littlewood and H. Levinson  \cite{Cartwright,Levinson}  caused  the  insights  of S. Smail.  Infinite  number of  hyperbolic  points in a bounded  area,  symbolic  dynamics,  manifolds, Markov partitions  and  differentiable mappings    make  the  background for  the theory \cite{Adler} - \cite{Williams}

Recently,  we have  introduced  a concept  of domain structured chaos  \cite{AkhmetDiscreteRandom, AkhmetDomainStruct},  which  may  become a useful  tool  for analysis  of complex dynamics \cite{AkhmetSimilarity}.   This  time,  the   new  concept  of \textit{abstract  hyperbolic  sets } is    considered to  continue  the  line of the research.   What   is  suggested,  is not   application of the symbolic  dynamics   through topological  equivalence,   which   requests  other   advanced conditions for  the  relation establishment  \cite{Devaney, Holmes, Newhouse}.  Moreover,   we do  not  utilize finite  partitioning    \cite{Bowen} and even countable  \cite {Bunimovich}  partitioning    for   involvement    of the symbolic  dynamics to  explore  the  complexity. We   perform   the  complete indexing  of the  domain.  In other  words,  infinitesimal  uncountable    partitioning,    and   determine  the  map,  which  orbits  are  constructed      through    shifting  in  indexes  such  that   the  research    can  be  focused on chaotic behavior.  That  is,   a  new   realization  of the  Bernoulli  map  is present.   By  utilizing   the\textit{ separation} and \textit{diameter  conditions}  \cite{AkhmetSimilarity, AkhmetDomainStruct},    we   explore   the  \textit{domain   structured  chaos }in the abstract  hyperbolic   sets.  The  discussion   initiated in this paper can    precise  chaotic dynamics description  as well  as  hyperbolic sets,     explore  relations,  which  either   have not  been  seen in former studies  or   can provide more sense to them.   For   example, in the present  paper,    a new proof  of   the Smale horseshoe chaos  is provided,    which  is free of  the  equivalence  application.  The  results    are    complemented  by   Poincar\'{e}  type  of chaos  and   isolated  dynamics on unstable  sets. 

	\section{Nonchaotic dynamics } 
\label{sec1}
In this  section we introduce    a formal  set  and a map, which   generalize what  is is known  for symbolic dynamics \cite{Wiggins88}.   Another suggestion  is  about    abstraction of the   concept  of hyperbolic sets \cite{Smale2,Bowen}, which    is understood, in our research,   as renouncing   some of  former properties and considering   new ones,  which  are  obvious,   and useful.

    Let the   space $(\mathcal F,  d)$  be given  with   metric $ d.$ The elements of the  set  are  labeled such  that      
    \begin{equation} \label{AbstFracSet1}
	\mathcal{F} =  \big\{\mathcal{F}_{\ldots i_{-2}i_{-1}i_{0}\centerdot i_1 i_2 \ldots}: i_k=1,2, ..., m, \; k= \ldots -2,-1,0,1, 2, ... \big\},
\end{equation}
where  $m$  is a  fixed natural number  and the  symbol $\centerdot$  is used to  determine  the  dynamics of the  present  research. 

The set  $\mathcal{F}$  is   called an   \textit{abstract  hyperbolic  set},      and  its elements    are  called \textit{hyperbolic  points}. The  set  of points,  
	\begin{equation} \label{AbstFracSet2}
		\mathcal{F}_{i_1 i_2 ...} =  \big\{\mathcal{F}_{\ldots j_{-2}j_{-1}j_{0}\centerdot i_1 i_2 ... i_n ... } : j_k=1,2, ..., m, \; k= \ldots -2,-1,0 \big\}
\end{equation}
is  the  \textit{stable set} for  a  fixed point   $\mathcal{F}_{\ldots i_{-k}\ldots i_{-2}i_{-1}i_{0}\centerdot i_1 i_2 ... i_n ... },$    and   the    collection   
  \begin{equation} \label{AbstFracSet3}
  	\mathcal{F}_{\ldots i_{-k}\ldots i_{-2}i_{-1}i_{0}} =  \big\{\mathcal{F}_{\ldots i_{-2}i_{-1}i_{0}\centerdot j_1 j_2 ... } : j_k=1,2, ..., m, \; k= 1,2,\ldots \big\}
  \end{equation}
  is  said to  be the  \textit{unstable   set}   of  the  point.

 The following  unions   are  needed,   in what  follows,  
\begin{equation} \label{AbstFracSubSet1}
\mathcal{F}_{i_{-k}i_{-k+1}\ldots i_ 0\centerdot i_1 i_2 ... i_n} = \bigcup_{j_k=1,2, ..., m } \mathcal{F}_{\ldots i_{-k-1}i_{-k}i_{-k+1}\ldots i_ 0\centerdot i_1 i_2 ... i_ni_{n+1}\ldots },
\end{equation}
where   indices  $ i_{-k},i_{-k+1},\ldots, i_ 0,i_1, i_2, ..., i_n,$    are  fixed, 
\begin{equation} \label{AbstFracSubSet2}
\mathcal{F}_{i_1 i_2 ... i_n} = \bigcup_{j_k=1,2, ..., m } \mathcal{F}_{\ldots i_{-k-1}i_{-k}i_{-k+1}\ldots i_ 0\centerdot i_1 i_2 ... i_ni_{n+1}\ldots },
\end{equation}
where indices  $i_1, i_2, ..., i_n,$  are fixed, and 
\begin{equation} \label{AbstFracSubSet3}
\mathcal{F}_{i_{-k}i_{-k+1}\ldots i_ 0} = \bigcup_{j_k=1,2, ..., m } \mathcal{F}_{\ldots i_{-k-1}i_{-k}i_{-k+1}\ldots i_ 0\centerdot i_1 i_2 ... i_ni_{n+1}\ldots },
\end{equation}
where indices  $ i_{-k},i_{-k+1},\ldots, i_ 0,$  are fixed.

We will  say   that  the \textit{diameter condition} is valid for the sets  if   for   every  fixed sequence $\ldots i_{-k}i_{-k+1}\ldots i_ 0\centerdot i_1 i_2 ... i_n\ldots$  it  is  true that 
\begin{equation} \label{Diamprop1}
\mathrm{diam}(\mathcal{F}_{i_{-k}i_{-k+1}\ldots i_ 0\centerdot i_1 i_2 ... i_n}) \to 0 \;\; \text{as} \;\; k, n \to \infty,
\end{equation}

where $ \mathrm{diam}(A) = \sup \{ d(\textbf{x}, \textbf{y}) : \textbf{x}, \textbf{y} \in A \} $, for a subset $ A $   of  $ \mathcal{F} $.

  Determine the  following   one-to-one and onto  map  $\varphi : \mathcal F \to  \mathcal F,$  such  that 
$$\varphi( \mathcal{F}_{\ldots i_{-k}\ldots i_{-2}i_{-1}i_{0}\centerdot i_1 i_2 ... i_n ... } ) =  \mathcal{F}_{\ldots i_{-k}\ldots i_{-2}i_{-1}i_{0} i_1  \centerdot i_2 ... i_n ... }.$$

Let  us fix  a point   $\mathcal{F}_{\ldots i_{-k}\ldots i_{-2}i_{-1}i_{0}\centerdot i_1 i_2 ... i_n ... }$  of the    hyperbolic set  and  an  arbitrary  element ,   $\mathcal{F}_{\ldots j_{-2}j_{-1}j_{0}\centerdot i_1 i_2 ... i_n ... },  $   from  the  stable manifold  of the point.   The  definition  of the  stable  set  and the  diameter  property    it follows   that  
$$d(\varphi^n(\mathcal{F}_{\ldots j_{-2}j_{-1}j_{0}\centerdot i_1 i_2 ... i_n ... }),\varphi^n (\mathcal{F}_{\ldots i_{-k}\ldots i_{-2}i_{-1}i_{0}\centerdot i_1 i_2 ... i_n ... } )) \to  0$$ as $n \to \infty.$
 
Similarly,   it  is true that  
$$d(\varphi^n(\mathcal{F}_{\ldots i_{-2}i_{-1}i_{0}\centerdot j_1 j_2 ... j_n ... }),\varphi^n (\mathcal{F}_{\ldots i_{-k}\ldots i_{-2}i_{-1}i_{0}\centerdot i_1 i_2 ... i_n ... } )) \to  0$$ as $n \to - \infty,$
for   points  of   the  unstable  manifold. 

It is clear that
\[ \mathcal{F} \supseteq \mathcal{F}_{i_1} \supseteq \mathcal{F}_{i_1 i_2} \supseteq ... \supseteq \mathcal{F}_{i_1 i_2 ... i_n} \supseteq \mathcal{F}_{i_1 i_2 ... i_n i_{n+1}} ... \]
and 
\[ \mathcal{F} \supseteq \mathcal{F}_{i_0} \supseteq \mathcal{F}_{i_{-1} i_0} \supseteq ... \supseteq \mathcal{F}_{i_{-n} i_{-n+1}... i_0} \supseteq \mathcal{F}_{i_{-n-1}i_{-n}... i_0} ..\, .\]

Considering iterations of the map, one can verify that
\begin{equation} \label{MapSubset1}
\varphi^n(\mathcal F_{i_1 i_2 ... i_n}) = \mathcal{F},
\end{equation}
and
\begin{equation} \label{MapSubset2}
\varphi^{-n}(\mathcal F_{i_{-n} i_{-n+1} ... i_0}) = \mathcal{F},
\end{equation}
for arbitrary natural number $ n .$  The relations (\ref{MapSubset1}) and (\ref{MapSubset2}) give us a reason to call $ \varphi $ a \textit{similarity map} and the number $ n $ the \textit{order of similarity}. We   called  it    \textit{abstract  similarity  map }  in  \cite{AkhmetSimilarity}.   

\section{Chaos}

Denote  by $ d(A, B)= \inf \{ d(\textbf{x}, \textbf{y}) : \textbf{x}\in A, \, \textbf{y} \in B \} $ the  function of    two bounded subsets  $ A $ and $ B $   of $ \mathcal{F} .$    The  space  $ (\mathcal{F},d) $ satisfies the \textit{separation condition }of degree $ n $ if there exist a positive number $ \varepsilon_0 $ and a natural number $ n $  such that for arbitrary   indices  $ i_1 i_2 ... i_n $ one can find   indices $ j_1 j_2 ... j_n $ such  that
\begin{equation} \label{C2}
d \big( \mathcal{F}_{i_1 i_2 ... i_n} \, , \, \mathcal{F}_{j_1 j_2 ... j_n} \big) \geq \varepsilon_0.
\end{equation}
A point $ \mathcal{F}_{\ldots i_{-k}\ldots i_{-2}i_{-1}i_{0}\centerdot i_1 i_2 ... i_n ... }$    from $\mathcal{F} $ is periodic with period $ n $ if its index consists of endless repetitions of a block of $ n $ terms.

\begin{theorem} \label{Thm1} If   the diameter  and  separation conditions are valid, then    the  similarity  map  $\varphi$  is chaotic on each  unstable set  of the dynamics in the sense of Devaney .
\end{theorem}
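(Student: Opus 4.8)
The plan is to verify the three defining ingredients of Devaney chaos — density of periodic points, topological transitivity, and sensitive dependence on initial conditions — for the dynamics that $\varphi$ induces on a fixed unstable set $U=\mathcal F_{\ldots i_{-2}i_{-1}i_0}$. The starting observation is that every element of $U$ is pinned down by its future index $j_1 j_2\ldots$ (the past $\ldots i_{-1}i_0$ being frozen), so $U$ is naturally parametrised by one-sided sequences over $\{1,\ldots,m\}$, and a forward application of $\varphi$ advances the marker $\centerdot$ one place to the right, deleting $j_1$ from the future; hence on these future coordinates $\varphi$ acts as the one-sided shift $j_1 j_2 j_3\ldots\mapsto j_2 j_3\ldots$, and it is the chaos of this induced shift that the theorem asserts. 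Before touching the three axioms I would record the two metric facts that make this symbolic picture faithful. First, since the past is frozen, the cylinders $U\cap\mathcal F_{j_1\ldots j_n}$ satisfy $\mathrm{diam}(U\cap\mathcal F_{j_1\ldots j_n})\le\inf_k \mathrm{diam}(\mathcal F_{i_{-k}\ldots i_0\centerdot j_1\ldots j_n})\to 0$ as $n\to\infty$ by the diameter condition (\ref{Diamprop1}); this yields continuity of the induced map and the principle that agreement on a long initial future block forces proximity. Second, by the separation condition (\ref{C2}) there is a fixed scale $\varepsilon_0>0$ at which length-$n$ future blocks can be told apart, which supplies the uniform expansion needed at the end.

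For density of periodic points, given $x\in U$ with future $a_1 a_2\ldots$ and $\varepsilon>0$, I would use the diameter condition to choose $n$ so large that every element of $U$ agreeing with $x$ on $a_1\ldots a_n$ lies within $\varepsilon$ of $x$; the element $q\in U$ whose future is the periodic repetition $\overline{a_1\ldots a_n}$ is then fixed by the $n$-th iterate of the induced shift and satisfies $d(x,q)<\varepsilon$. For topological transitivity I would exhibit one dense orbit: let $x^\star\in U$ have as its future the concatenation of all finite words over $\{1,\ldots,m\}$ listed by increasing length. For any target $z\in U$ with future $b_1 b_2\ldots$ and any $\varepsilon>0$, pick $n$ with $\mathrm{diam}(U\cap\mathcal F_{b_1\ldots b_n})<\varepsilon$; the word $b_1\ldots b_n$ occurs in the future of $x^\star$, so after some number $p$ of shifts the image of $x^\star$ begins with $b_1\ldots b_n$ and therefore lies within $\varepsilon$ of $z$. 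Hence the orbit of $x^\star$ is dense and the induced map is transitive.

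The sensitivity step is where the separation condition is genuinely used rather than mere bookkeeping, and this is the part I expect to require the most care. Fix $x\in U$ with future $a_1 a_2\ldots$ and $\delta>0$; by the diameter condition every $y\in U$ sharing a long enough initial block $a_1\ldots a_N$ has $d(x,y)<\delta$. I would apply (\ref{C2}) to the length-$n$ block $a_{N+1}\ldots a_{N+n}$ to obtain a block $j_1\ldots j_n$ with $d(\mathcal F_{a_{N+1}\ldots a_{N+n}},\mathcal F_{j_1\ldots j_n})\ge\varepsilon_0$, and take $y\in U$ with future $a_1\ldots a_N j_1\ldots j_n\ldots$. Then $d(x,y)<\delta$, while after $N$ iterations the images land in the two separated cylinders, so their distance is at least $\varepsilon_0$; thus the map has sensitive dependence with constant $\varepsilon_0$. (Alternatively, transitivity together with dense periodic points already forces sensitivity by the theorem of Banks--Brooks--Cairns--Davis--Stacy, but the separation condition delivers it directly and with an explicit constant.)

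The main obstacle, to my mind, is not any single axiom but the preliminary reconciliation of the abstract metric $d$ with the symbolic cylinder structure on $U$: one must be sure that the diameter condition really makes the induced shift continuous and the depth-$n$ cylinders a neighbourhood basis, and that the separation condition supplies a scale $\varepsilon_0$ uniform over all blocks and indexing levels. Once these foundations are laid, the three axioms follow from the standard symbolic constructions above; among them the transitivity argument, built on the universal concatenated sequence, is the most constructive and is where I would be most careful to keep precise track of which indices have been shifted past the marker $\centerdot$.
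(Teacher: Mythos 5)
Your three verifications coincide in substance with the paper's: transitivity via a universal future sequence concatenating all finite words, density of periodic points via long periodic repetitions of an initial block made small by the diameter condition (\ref{Diamprop1}), and sensitivity by splicing a separated block after a long shared prefix and invoking (\ref{C2}) with the uniform constant $\varepsilon_0$. Where you genuinely depart from the paper is in the preliminary formalization, and your version is the more careful one. You observe that on an unstable set $U$ the past is frozen, parametrize $U$ by one-sided futures, and prove chaos for the \emph{induced} one-sided shift on these coordinates. The paper instead works with $\varphi$ itself, and this creates a looseness your setup silently repairs: $U$ is not $\varphi$-invariant (applying $\varphi$ appends $i_1$ to the frozen past, moving the point to a different unstable set), and the paper's approximating periodic point $\mathcal{F}_{\ldots i_1 i_2 \ldots i_k \centerdot i_1 i_2 \ldots i_k \ldots}$ has a periodic past, hence does not lie in $U$ at all --- indeed an unstable set with non-periodic frozen past contains no $\varphi$-periodic point whatsoever, so ``density of periodic points in $U$'' can only hold in your induced-shift sense, where periodicity concerns future coordinates only. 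Likewise the paper's transitivity targets two-sided cylinders $\mathcal{F}_{j_{-k}\ldots j_0\centerdot j_1\ldots j_n}$, so what it actually establishes is density of an orbit in the full hyperbolic set $\mathcal{F}$, whereas your argument stays inside $U$ throughout. What the paper's route buys is a statement about the genuine map $\varphi$ on the whole domain (consistent with its later remark that $\mathcal{F}$ admits domain-structured chaos as a union of chaotic unstable sets); what yours buys is a literally correct reading of ``chaotic on each unstable set,'' at the cost of replacing $\varphi$ by its conjugate shift on futures --- a distinction the theorem statement blurs but that any rigorous write-up must confront, exactly as you flag in your final paragraph.
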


\begin{proof}  Utilizing the diameter condition, the transitivity will be proved if   there  exists   an element $\mathcal{F}_{\ldots i_{-k}\ldots i_{-2}i_{-1}i_{0}\centerdot i_1 i_2 ... i_n ... } $    of the  unstable  set  such that for any finite  sequence  $j_{-k}\ldots j_{-2}j_{-1}j_{0} j_1 j_2 ... j_n$   one can  find    that  for  a sufficiently   large   $ p,$     the  point  
	 $ \varphi^p(\mathcal{F}_{\ldots i_{-k}\ldots i_{-2}i_{-1}i_0\centerdot i_1 i_2 \ldots i_n \ldots })$     belongs to  $\mathcal{F}_{j_{-k}\ldots j_{-2}j_{-1}j_0\centerdot  j_1 j_2 ... j_n}.$    This  is true,   since   one can construct a sequence $ \ldots i_1 i_2 \ldots i_n \ldots,$    which   contains all the sequences of the type $ j_{-k}\ldots j_{-2}j_{-1}j_0 j_1 j_2 \ldots j_n$ as blocks.

	Fix a member $ \mathcal{F}_{\ldots i_{-k}\ldots i_{-2}i_{-1}i_{0}\centerdot i_1 i_2 ... i_n ... }$  of   the  unstable  set  and a positive number $ \delta $. Find a   sufficiently  large  number $ k $ such that $ \mathrm{diam}(\mathcal{F}_{i_1 i_2 ... i_k\centerdot i_1 i_2 ... i_k}) < \delta$ and choose a $ k $-periodic element $ \mathcal{F}_{\ldots i_1 i_2 ... i_k\centerdot  i_1 i_2 ... i_k ...} $ of $ \mathcal{F}_{i_1 i_2 ... i_k} $. It is clear that the periodic point is an $ \delta $-approximation for the considered member.   Thus,  the density of periodic points is proved.

	For sensitivity, fix a point $\mathcal{F}_{\ldots i_{-k}\ldots i_{-2}i_{-1}i_{0}\centerdot i_1 i_2 ... i_n ... }$  of the  unstable set   and an arbitrary positive number $ \varepsilon $.   Due to  the  diameter condition there exist an integer $ k $ and point  $\mathcal{F}_{\ldots i_{-k}\ldots i_{-2}i_{-1}i_{0}\centerdot i_1 i_2 ... i_k j_{k+1} j_{k+2} ...} \neq \mathcal{F}_{\ldots i_{-k}\ldots i_{-2}i_{-1}i_{0}\centerdot i_1 i_2 ... i_n ... } $ such that  $ d(\mathcal{F}_{\ldots i_{-k}\ldots i_{-2}i_{-1}i_{0}\centerdot i_1 i_2 ... i_n ... }, \mathcal{F}_{\ldots i_{-k}\ldots i_{-2}i_{-1}i_{0}\centerdot i_1 i_2 ... i_k j_{k+1} j_{k+2} ...}) < \varepsilon. $   We choose  $ j_{k+1}, j_{k+2}, ... $ such that $ d(\mathcal{F}_{i_{k+1} i_{k+2} ... i_{k+n}}, \mathcal{F}_{j_{k+1} j_{k+2} ... j_{k+n}}) > \varepsilon_0 $, by the separation condition. This proves the sensitivity.	
\end{proof}

In \cite{AkhmetPoincare}, Poisson stable motion is utilized to distinguish  chaotic behavior from  periodic motions in Devaney and Li-Yorke types.  The dynamics  is given the named  Poincar\'{e} chaos.  The next theorem shows that the Poincar\'{e} chaos is valid for the similarity dynamics.

\begin{theorem} \label{Thm2}    The   abstract  similarity  map  $\varphi$   is chaotic  in Poincar\'{e} sense  on each unstable  set  of the dynamics,  if    the diameter   and separation, properties   are  valid.
\end{theorem}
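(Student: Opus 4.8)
The plan is to recall what Poincaré chaos requires and then build a point whose orbit realizes it. Following \cite{AkhmetPoincare}, a map exhibits Poincaré chaos on a set if there exists a pair of motions that are positively (or unboundedly) proximal—meaning they come arbitrarily close along a sequence of iterates—while at least one of the two trajectories is Poisson stable but neither is asymptotically periodic. So the task reduces to producing, for a fixed unstable set, an index sequence that is Poisson stable under the shift $\varphi$ together with a companion point that is proximal to it, and then using the separation condition to rule out convergence to periodicity.

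First I would translate Poisson stability into the indexing language. A point $\mathcal{F}_{\ldots i_0 \centerdot i_1 i_2 \ldots}$ is Poisson stable for $\varphi$ exactly when its forward shift orbit returns arbitrarily close to itself infinitely often; by the diameter condition this is equivalent to the bi-infinite sequence being \emph{recurrent} in the combinatorial sense, i.e. every finite block $i_1 i_2 \ldots i_n$ that appears in the sequence reappears infinitely often with bounded gaps. The standard construction is to take a sequence that lists all finite words over $\{1,\ldots,m\}$ (the same device already used for transitivity in the proof of Theorem~\ref{Thm1}) but arranged so that each block recurs with controlled spacing; such a sequence is Poisson stable, and by the diameter condition the proximity of iterates in the metric $d$ follows from agreement of longer and longer index blocks. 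Since this construction can be carried out inside any prescribed unstable set $\mathcal{F}_{\ldots i_{-k}\ldots i_0}$ (one only fixes the backward tail and is free to choose $i_1 i_2 \ldots$), the Poisson stable motion lives on the required set.

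Next I would exhibit the companion trajectory and invoke proximality. For the chosen Poisson stable sequence, take a second point that agrees with it on an arbitrarily long forward block—say $i_1 \ldots i_N$—but differs afterward. The diameter condition then forces the distance between appropriate iterates to tend to zero along a subsequence, giving the unboundedly proximal pair required for Poincaré chaos. Finally, to distinguish the dynamics from periodic motion, I would use the separation condition exactly as in Theorem~\ref{Thm1}: because one can always select, past any agreement block, indices $j_{k+1} j_{k+2} \ldots$ whose $n$-blocks are $\varepsilon_0$-separated from the corresponding $i$-blocks, the two trajectories also separate by at least $\varepsilon_0$ infinitely often, so the motion is not asymptotically periodic. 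Assembling these pieces yields Poincaré chaos on the unstable set.

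The main obstacle I anticipate is the recurrence bookkeeping for Poisson stability: one must arrange the universal listing of blocks so that \emph{each} block reappears with \emph{bounded return gaps}, not merely reappears, since genuine Poisson stability (unlike mere transitivity) demands recurrence of the whole orbit rather than just visiting of all neighborhoods. Handling the interplay between the combinatorial recurrence of index blocks and the metric proximity supplied by the diameter condition—and verifying that the separation condition indeed prevents the proximal pair from being asymptotic to a single periodic orbit—is the delicate part; the transitivity, density, and sensitivity ingredients themselves transfer almost verbatim from the proof of Theorem~\ref{Thm1}.
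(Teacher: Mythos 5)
Your overall strategy is the right reading of the paper's one-line proof (the paper simply reduces the theorem to verifying Lemma 3.1 of \cite{AkhmetPoincare} for the similarity map): one must produce an \emph{unpredictable} point on the unstable set, i.e.\ a Poisson stable point whose orbit also separates from its returned copies, and your division of labor --- diameter condition for metric proximity of iterates, separation condition for the $\varepsilon_0$-divergence --- is exactly the intended mechanism. However, there is a genuine error in your Poisson stability step. You assert that Poisson stability is equivalent to every block recurring \emph{with bounded gaps}, and you propose a sequence that both lists all finite words over $\{1,\dots,m\}$ and has this bounded-gap recurrence. These two demands are incompatible: bounded-gap recurrence of every block is uniform (Birkhoff) recurrence, which forces the orbit closure of the sequence to be a minimal set, whereas a sequence containing every finite word has the full shift as its orbit closure, and the full shift is not minimal (it contains, e.g., the constant sequences as proper closed invariant subsets). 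So the object you describe does not exist. The fix is that Poisson stability needs much less: it only requires $\varphi^{t_n}(p)\to p$ along \emph{some} sequence $t_n\to\infty$, which in the indexing language means that for each $n$ the central block $i_{-n}\ldots i_0\centerdot i_1\ldots i_n$ (with the backward part fixed by the chosen unstable set, and only finitely many backward indices mattering thanks to the diameter condition) reappears somewhere in the forward sequence --- with no constraint whatsoever on the return gaps. A diagonal insertion of these central blocks, interleaved with whatever other words you wish, achieves this.

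A second, smaller inaccuracy is definitional: Poincar\'{e} chaos in \cite{AkhmetPoincare} is not characterized by a proximal pair with ``neither asymptotically periodic.'' It is defined through an unpredictable point $p$: there exist $\varepsilon_0>0$ and sequences $t_n\to\infty$, $s_n\to\infty$ such that $\varphi^{t_n}(p)\to p$ while $d\big(\varphi^{t_n+s_n}(p),\varphi^{s_n}(p)\big)\ge\varepsilon_0$; the comparison is between the orbit and its own time-shifted copy, not a second independently chosen point. Your companion-point picture can be salvaged by taking the companion to be $\varphi^{t_n}(p)$ itself, and then your separation-condition step is precisely what supplies the $\varepsilon_0$-divergence: when inserting the return copy of a central block at position $t_n$, follow it by indices $j$-blocks that the separation condition guarantees are $\varepsilon_0$-far from the corresponding $i$-blocks of $p$. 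With these two corrections --- recurrence without gap bounds, and divergence along returns in place of non-asymptotic-periodicity --- your construction verifies the hypotheses of Lemma 3.1 of \cite{AkhmetPoincare} and the proof closes; note also that the transitivity, periodic-density, and sensitivity ingredients of Theorem \ref{Thm1} are not actually needed here, since the single unpredictable point already yields Poincar\'{e} chaos.
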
  

The proof of the last theorem is based on the verification of Lemma 3.1 in \cite{AkhmetPoincare},  applied  to the similarity map.

In addition to the Devaney and Poincar\'{e}  chaos, it can be shown that the Li-Yorke chaos exists  fo   map $ \varphi $  on every  unstable  set  of the  dynamics. The proof  is similar to that of Theorem 6.35 in \cite{Chen}.   Thus, we  have obtained that    there  are Poncar\'{e},  Li-Yorke and Devaney  chaos on each unstable set  of the similarity  map.      Consequently,  the hyperbolic  set  $\mathcal F$   admits  the domain structured chaos.   It  is important  to  say  that  the    dynamics is  the  union  of  chaotic   unstable  sets.  Moreover,  the  chaos  can  be approved by  considering bi-infinite  symbolic dynamics,  but  we decide to utilize the  above  set-by-set way, since it provides more information on the  structure  of  the chaos.      It  is clear, also,  that   accepting  the separation property  for  decreasing  time,  one can  approve domain structured  chaos  for  stable  sets.

\section{Smale horseshoes}

As   an example,  consider the   Smale horseshoe map  \cite{Smale1}. We    will  discuss the    dynamics  omitting  many  details, which  can  be  found in \cite{Devaney}.   The Smale horseshoe map is the set of basic topological operations of stretching and folding.   The  result  of   the  iterations are   invariant  sets  
$$\Lambda_+ = \{q\in S:   F^k(q) \in S,  k =1,2,\ldots\}$$
and 
 $$\Lambda_- = \{q\in S:   F^{-k}(q) \in S,  k =1,2,\ldots\},$$ 
 where  $S$  is  the  unit  square  and $F$ is the  Smale map. 
  Sets    $\Lambda_+$  and   $\Lambda_-$  are  products of Cantor sets  with  a vertical  and a horizontal  lines respectively.  The  domain  under discussion   is  $\Lambda = \Lambda_+ \cap \Lambda_-.$      In literature,  the   dynamics ${(\Lambda, F)}$  is  proved to  be    topologically  equivalent  to  the dynamics of $(\Sigma_2, \sigma),$  where  $\Sigma_2$   is the  set  of two-sided sequences of $0'$s and $1'$s  and  $\sigma$  is the  Bernoulli shift.   Considering  the  equivalence,   chaos   was proved  for  the  map  $F.$    In our  case,   to  accomplish  the  analysis   we utilize the experience just    for the indexing  points of the  set $\Lambda,$  and  constructing   the  abstract  hyperbolic  set  $\mathcal F$    with the  Euclidean planar  distance.  The  construction   in  \cite{Devaney} demonstrates  that  the  diameter  and separation conditions are  valid for  the  abstract  hyperbolic set.      Consequently,    the  domain structured  chaos  of the  similarity  map  is proper for  the  set  $\Lambda.$    Apparently,    the way  of chaos  generation  can  be applied to  other  sets  with  hyperbolic  structure \cite{Adler}-\cite{Williams}.

  \section{Conclusion}  We   suggest  an  abstraction for two-sided dynamical  processes with discrete time.  A method  of chaos generation  is provided as  dynamics of the abstract  similarity  map.  Thus,  all the three types of Poincar\'{e},  Li-Yorke and Devaney  chaos  are  approved in the  domain.  The approach  is also of strong   practical sense. For instance, it  has been  realized in our papers \cite{AkhmetDomainStruct,AkhmetSimilarity} for fractals and neural networks.   This time, it is proven that the  hyperbolic  set  is  a  union of chaotic unstable  sets. Thus,   comprehension of the  dynamical   has been deepen.  It  is important  that    the  chaotic  behavior   can  be extend   for the   decreasing  time,  if  one assume  the   similar  separation  condition.  Another   benefit   is that   multidimensional  generalizations   are  possible on the  basis of proposals.    The  processes  in the focus  are  deterministic.   Not  only   the  deep  past,  but  the  very  far   future   has to  be determined for the  abstract  similarity  map  application.  That is,   the  hyperbolic dynamics  considers processes with infinite  chaotic history.     Nevertheless,  the approach  does not  exclude  possibility  of random dynamics   description  \cite{AkhmetDiscreteRandom}.

\

\end{document}